\newtheorem{theorem}{Theorem}[section]
\newtheorem{remark}{Remark}[section]
\newtheorem{algorithm}{Algorithm}[section]
\newtheorem{example}{Example}[section]
\begin{document}
\title{A Full Multigrid Method for Nonlinear Eigenvalue Problems\footnote{This work was supported
 in part by National Science Foundations of China
(NSFC 91330202, 11371026, 11001259, 11031006, 2011CB309703)
and the National Center for Mathematics and Interdisciplinary Science,
CAS and the President Foundation of AMSS-CAS.}}
\author{
Shanghui Jia\footnote{School of Applied
Mathematics, Central University of Finance and Economics,
  Beijing 100081, China(shjia@lsec.cc.ac.cn)},\ \
Hehu Xie\footnote{LSEC, ICMSEC,Academy of Mathematics and Systems Science, Chinese Academy of
Sciences, Beijing 100190, China (hhxie@lsec.cc.ac.cn)},\ \
Manting Xie\footnote{LSEC, ICMSEC,Academy of Mathematics and Systems Science, Chinese Academy of
Sciences, Beijing 100190, China (xiemanting@lsec.cc.ac.cn)} \ \ and \ \
Fei Xu\footnote{LSEC, ICMSEC,Academy of Mathematics and Systems Science, Chinese Academy of
Sciences, Beijing 100190, China (xufei@lsec.cc.ac.cn)}
}
\date{}
\maketitle
\begin{abstract}
This paper is to introduce a type of full multigrid method for the nonlinear eigenvalue problem.
The main idea is to transform the solution of nonlinear eigenvalue problem into a series of
solutions of the corresponding linear boundary value problems on the sequence of finite element
spaces and nonlinear eigenvalue problems on the coarsest finite element space.
The linearized boundary value problems are solved by some multigrid iterations. Besides the
multigrid iteration, all other efficient iteration methods for solving boundary value
problems can serve as the linear problem solver. We will prove that the computational work
of this new scheme is truly optimal, the same as solving the linear corresponding boundary
value problem. In this case, this type of iteration scheme certainly improves the overfull
efficiency of solving nonlinear eigenvalue problems.
Some numerical experiments are presented to validate the efficiency of the new method.

\vskip0.3cm {\bf Keywords.}\ Nonlinear eigenvalue problem, full multigrid method,
multilevel correction,
finite element method.

\vskip0.2cm {\bf AMS subject classifications.} 65N30, 65N25, 65L15, 65B99.
\end{abstract}

\section{Introduction}
In recent years, much effort has been devoted to the study of
problems in solving large scale eigenvalue problems. Among these eigenvalue problems, there exist many
nonlinear eigenvalue problems
\cite{Bao,BaoDu,CancesChakirMaday,ChenGongHeYangZhou,ChenGongZhou,ChenHeZhou,KohnSham,Martin,ParrYang,SulemSulem},
for instance the calculation of the Gross-Pitaevskii equation describing the ground states of
Bose-Einstein condensates
\cite{Bao,BaoDu}  or the Hartree-Fock and Kohn-Sham equations used to calculate ground state
electronic structures of molecular systems \cite{ChenGongHeYangZhou,ChenGongZhou,KohnSham,Martin,ParrYang,SulemSulem}
from physics, chemistry and material science. However, these high-dimensional eigenvalue problems
 are always very difficult to solve.

The multigrid and multilevel
methods \cite{BankDupont,Bramble,BramblePasciak,BrambleZhang,BrandtMcCormickRuge,Hackbusch_Book,
McCormick,ScottZhang,Shaidurov,ToselliWidlund,Xu}
provide optimal order algorithms for solving boundary value problems.
The error bounds of the approximate solutions obtained from these efficient numerical algorithms are comparable
to the theoretical bounds determined by the finite element discretization. But there is no many efficient numerical
methods for solving nonlinear eigenvalue problems
with optimal complexity. Recently, a type of multigrid method for eigenvalue problems has been proposed
in \cite{LinXie,Xie_IMA,Xie_Nonconforming,Xie_JCP,XieXie}. The aim of this paper
is to present a full multigrid method (sometimes also referred as nested finite element method)
for solving nonlinear eigenvalue problems based on the combination of the multilevel correction
method \cite{Xie_IMA,Xie_JCP} and the multigrid iteration for
boundary value problems. Comparing with the method in \cite{LinXie,Xie_IMA,Xie_JCP,XieXie},
the difference is that it is not necessary to solve the linear boundary value problem
exactly in each correction step. We only
get an approximate solution with some multigrid iteration steps. In this new version of
multigrid method, solving nonlinear eigenvalue problem will not be much more difficult than the multigrid scheme for the
corresponding linear boundary value problems.

An outline of the paper goes as follows. In Section 2, we introduce the
finite element method for eigenvalue problem and state some basic assumptions
about the error estimates. A type of full multigrid algorithm for solving the nonlinear
eigenvalue problem and the corresponding computational work estimate are given in Section 3.
Two numerical examples are presented in section 4 to validate our theoretical analysis.
Some concluding remarks are given in the last section.

\section{Finite element method for nonlinear eigenvalue problem}\label{sec;preliminary}
This section is devoted to introducing some notation and the finite element
method for nonlinear eigenvalue problem. In this paper, the standard notation
for Sobolev spaces $W^{s,p}(\Omega)$ and their
associated norms and semi-norms (cf. \cite{Adams}) will be used. For $p=2$, we denote
$H^s(\Omega)=W^{s,2}(\Omega)$ and
$H_0^1(\Omega)=\{v\in H^1(\Omega):\ v|_{\partial\Omega}=0\}$,
where $v|_{\Omega}=0$ is in the sense of trace,
$\|\cdot\|_{s,\Omega}=\|\cdot\|_{s,2,\Omega}$.  Let $V=H_0^1(\Omega)$
and $\|\cdot\|_s$ denote $\|\cdot\|_{s,\Omega}$ for simplicity.
To facilitate the following instructions, the letter $C$ (with or without subscripts) denotes a generic
positive constant which may be different at its different occurrences through the paper.

This paper is concerned with the following nonlinear elliptic eigenvalue problem:
Find $(\lambda, u)\in \mathcal{R}\times H_0^1(\Omega)$ such that
\begin{equation}\label{nonlinear_eigen_problem}
\left\{
\begin{array}{rcl}
-\nabla\cdot(\mathcal{A}\nabla u)+f(x,u)&=&\lambda u, \quad {\rm in} \  \Omega,\\
u&=&0, \ \  \quad {\rm on}\  \partial\Omega,\\
\int_{\Omega}u^2d\Omega&=&1,
\end{array}
\right.
\end{equation}
where $\mathcal{A}$ is a symmetric and positive definite matrix with suitable
regularity, $f(x,u)$ is a nonlinear function corresponding to the variable $u$,
$\Omega\subset\mathcal{R}^d$ $(d=2,3)$ is a bounded domain with Lipschitz
boundary $\partial\Omega$.

In order to use the finite element method for
the eigenvalue problem (\ref{nonlinear_eigen_problem}), we define
the corresponding variational form as follows:
Find $(\lambda, u )\in \mathcal{R}\times V$ such that $b(u,u)=1$ and
\begin{eqnarray}\label{weak_form_nonlinear_pde}
a(u,v)&=&\lambda b(u,v),\quad \forall v\in V,
\end{eqnarray}
where
\begin{equation*}\label{def_a_b}
a(u,v) = \int_{\Omega}\Big(\mathcal{A}\nabla u\cdot\nabla v+f(x,u)v\Big)d\Omega,\quad
 b(u,v) = \int_{\Omega}uv d\Omega.
\end{equation*}
For simplicity of describing and understanding, we only consider
the numerical method for the simple eigenvalue case.

Now, let us define the finite element approximations for the problem
(\ref{weak_form_nonlinear_pde}). First we generate a shape-regular
decomposition of the computing domain $\Omega\subset \mathcal{R}^d\
(d=2,3)$ into triangles or rectangles for $d=2$ (tetrahedrons or
hexahedrons for $d=3$) (cf. \cite{BrennerScott,Ciarlet}).
The diameter of a cell $K\in\mathcal{T}_h$ is denoted by $h_K$ and
the mesh size $h$ describes  the maximum diameter of all cells
$K\in\mathcal{T}_h$. Based on the mesh $\mathcal{T}_h$, we can
construct a finite element space denoted by $V_h \subset V$.
For simplicity, we set $V_h$ as the linear finite element space
which is defined as follows
\begin{equation}\label{linear_fe_space}
V_h = \big\{ v_h \in C(\Omega)\ \big|\ v_h|_{K} \in \mathcal{P}_1,
\ \ \forall K \in \mathcal{T}_h\big\},
\end{equation}
where $\mathcal{P}_1$ denotes the linear function space.

The standard finite element scheme for eigenvalue
 problem (\ref{weak_form_nonlinear_pde}) is:
Find $(\bar{\lambda}_h, \bar{u}_h)\in \mathcal{R}\times V_h$
such that $b(\bar{u}_h,\bar{u}_h)=1$ and
\begin{eqnarray}\label{fem_nonlinear_pde}
a(\bar{u}_h,v_h)
&=&\bar{\lambda}_h b(\bar{u}_h,v_h),\quad\ \  \ \forall v_h\in V_h.
\end{eqnarray}

Define a bilinear form $\widehat{a}(\cdot,\cdot)$ as follows
$$\widehat{a}(w,v) = \int_\Omega\mathcal{A}\nabla w\cdot\nabla vd\Omega,\ \ \ \forall w\in V,\ \ \forall v\in V$$
and the correspoding norm $\|\cdot\|_a$ is defined by
\begin{eqnarray}\label{def of a norm}
\|v\|_a=\sqrt{\widehat{a}(v,v)}.
\end{eqnarray}
Denote
\begin{eqnarray}
\delta_h(u)=\inf_{v_h\in
V_h}\|u-v_h\|_{a}.
\end{eqnarray}

For designing and analyzing the full multigrid method, we state the
following assumption for the nonlinear function $f(x,\cdot): \mathcal{R}\times V\rightarrow V$.

\textbf{Assumption A}: The nonlinear function $f(x,\cdot)$ has the following estimate
\begin{eqnarray}\label{Err_functional_Global_0_Norm}
|(f(x,w)-f(x,v),\psi)|\leq C_f\|w-v\|_0\|v\|_a,\ \ \ \forall w\in V,\ \forall v\in V,\ \forall\psi\in V.
\end{eqnarray}

For generality, we only state the following assumptions about the error estimate
for the eigenpair approximation $(\bar{\lambda}_h, \bar{u}_h)$ defined by (\ref{fem_nonlinear_pde})
 (see, e.g., \cite{CancesChakirMaday,ChenGongZhou} for practical examples).

\textbf{Assumption B1}:\
The eigenpair approximation $(\bar\lambda_h,\bar u_h)$ of (\ref{fem_nonlinear_pde}) has the following
error estimates
\begin{eqnarray}
\|u-\bar u_h\|_a &\leq&(1+C_u\eta_a(V_h))\delta_h(u),\label{Err_Eigenfunction_Global_1_Norm} \\
|\lambda-\bar\lambda_h|+\|u-\bar u_h\|_0&\leq & C_u\eta_a(V_h)\|u-\bar u_h\|_a,\label{Err_Eigenfunction_Global_0_Norm}
\end{eqnarray}
where $\eta_a(V_h)$ depends on the finite dimensional space $V_h$ and has the following property
\begin{eqnarray}\label{relationship_eta_space}
\lim_{h\rightarrow 0}\eta_a(V_h)=0, \ \ \ \eta_a(\widetilde{V}_{h})\leq \eta_a(V_h)\ \ {\rm if}\
V_h\subset \widetilde{V}_h\subset V.
\end{eqnarray}
Here and hereafter $C_u$ is some constant depending on regularity of mesh and the exact eigenfunction
but independent of the mesh size $h$.

\textbf{Assumption B2}:\ Assume $V^h$ is a subspace of $V_h$.
Let us define the eigenpair approximation $(\lambda^h,u^h)$ by solving the nonlinear eigenvalue problem as follows:

Find $(\lambda^h,u^h)\in\mathcal{R}\times V^h$ such that $b(u^h,u^h)=1$ and
\begin{eqnarray}\label{Nonlinear_Eigenvalue_Problem_subspace}
a(u^h,v^h)&=&\lambda^hb(u^h,v^h),\ \ \ \ \forall v^h\in V^h.
\end{eqnarray}
Then the following error estimates hold
\begin{eqnarray}
\|\bar{u}_h-u^h\|_a &\leq& (1+C_u\eta_a(V^h))\delta_h(\bar{u}_h),\label{Error_Estimate_Eigenfunction_Subspace}\\
|\bar{\lambda}_h-\lambda^h|+\|\bar{u}_h-u^h\|_0&\leq &C_u\eta_a(V^h)\|\bar{u}_h-u^h\|_a,\label{Error_Estimate_Eigenvalue_Subspace}
\end{eqnarray}
where
\begin{eqnarray}\label{Detlat_Definition_Subspace}
\delta_h(\bar{u}_h):=\inf_{v^h\in V^h}\|\bar{u}_h-v^h\|_1.
\end{eqnarray}

\section{Full multigrid algorithm for nonlinear eigenvalue problem}
In this section, a type of full multigrid method is presented. In order to
 describe the full multigrid method, we first introduce
the sequence of finite element spaces. We generate a coarse mesh $\mathcal{T}_H$
with the mesh size $H$ and the coarse linear finite element space $V_H$ is
defined on the mesh $\mathcal{T}_H$. Then a sequence of
 triangulations $\mathcal{T}_{h_k}$
of $\Omega\subset \mathcal{R}^d$ is determined as follows.
Suppose $\mathcal{T}_{h_1}$ (produced from $\mathcal{T}_H$ by
regular refinements) is given and let $\mathcal{T}_{h_k}$ be obtained
from $\mathcal{T}_{h_{k-1}}$ via one regular refinement step
(produce $\beta^d$ subelements) such that
\begin{eqnarray}\label{mesh_size_recur}
h_k=\frac{1}{\beta}h_{k-1},\ \ \ \ k=2,\cdots,n,
\end{eqnarray}
where the positive number $\beta$ denotes the refinement index
 and larger than $1$ (always equals $2$).
Based on this sequence of meshes, the corresponding
 nested linear finite element spaces can be built such that
\begin{eqnarray}\label{FEM_Space_Series}
V_{H}\subseteq V_{h_1}\subset V_{h_2}\subset\cdots\subset V_{h_n}.
\end{eqnarray}
The sequence of finite element spaces
$V_{h_1}\subset V_{h_2}\subset\cdots\subset V_{h_n}$
 and the finite element space $V_H$ have  the following relations
of approximation accuracy (cf. \cite{BrennerScott,Ciarlet})
\begin{eqnarray}\label{delta_recur_relation}
\eta_a(V_H)\geq C\delta_{h_1}(u),\ \ \ \
\delta_{h_k}(u)=\frac{1}{\beta}\delta_{h_{k-1}}(u),\ \ \ k=2,\cdots,n.
\end{eqnarray}

\subsection{One correction step}
In order to design the full multigrid method, we first introduce an one correction
step in this subsection. Assume we have obtained an eigenpair approximation
$(\lambda_{h_k}^{(\ell)},u_{h_k}^{(\ell)})\in \mathcal{R}\times V_{h_k}$, where
$(\ell)$ denote the $\ell$-th iteration step in the $k$-th level finite element
space $V_{h_k}$. In this subsection, a type of correction step to improve
the accuracy of the current eigenpair approximation $(\lambda_{h_k}^{(\ell)},u_{h_k}^{(\ell)})$
will be given as follows.
\begin{algorithm}\label{Multigrid_Smoothing_Step} One Correction Step
\begin{enumerate}
\item Define the following auxiliary boundary value problem:
Find $\widehat{u}_{h_k}^{(\ell+1)}\in V_{h_k}$ such that
\begin{eqnarray}\label{aux_problem}
\widehat{a}(\widehat{u}_{h_k}^{(\ell+1)}, v_{h_k})&=&
(\lambda^{(\ell)}_{h_k}u^{(\ell)}_{h_k}-f(x,u^{(\ell)}_{h_k}),v_{h_k}),\ \
\ \forall v_{h_k}\in V_{h_k}.
\end{eqnarray}
Perform $m$ multigrid iteration steps with the initial value $u_{h_k}^{(\ell)}$
to obtain a new eigenfunction approximation
$\widetilde{u}^{(\ell+1)}_{h_k}\in V_{h_k}$ by
\begin{eqnarray}\label{Multigrid_Iteration_Step}
\widetilde{u}^{(\ell+1)}_{h_k} = MG(V_{h_k},\lambda^{(\ell)}_{h_k}u^{(\ell)}_{h_k}-f(x,u^{(\ell)}_{h_k}),u^{(\ell)}_{h_k},m),
\end{eqnarray}
where $V_{h_k}$ denotes the working space for the multigrid iteration,
$\lambda^{(\ell)}_{h_k}u^{(\ell)}_{h_k}-f(x,u^{(\ell)}_{h_k})$ is the right hand side term of the linear equation,
$u^{(\ell)}_{h_k}$ denotes  the initial guess and $m$ is the number of multigrid iteration times.
\item  Define a new finite element
space $V_{H,h_k}=V_H+{\rm span}\{\widetilde{u}^{(\ell+1)}_{h_k}\}$ and solve
the following eigenvalue problem:
Find $(\lambda^{(\ell+1)}_{h_k},u^{(\ell+1)}_{h_k})\in\mathcal{R}\times V_{H,h_k}$ such
that $b(u^{(\ell+1)}_{h_k},u^{(\ell+1)}_{h_k})=1$ and
\begin{eqnarray}\label{Eigen_Augment_Problem}
a(u^{(\ell+1)}_{h_k},v_{H,h_k})&=&\lambda^{(\ell+1)}_{h_k}b(u^{(\ell+1)}_{h_k},v_{H,h_k}),\ \ \
\forall v_{H,h_k}\in V_{H,h_k}.
\end{eqnarray}
\end{enumerate}
In order to simplify the notation and summarize the above two steps, we define
\begin{eqnarray*}
(\lambda^{(\ell+1)}_{h_k},u^{(\ell+1)}_{h_k})=EigenMG(V_H,\lambda^{(\ell)}_{h_k}, u^{(\ell)}_{h_k},V_{h_k},m).
\end{eqnarray*}
\end{algorithm}
\begin{theorem}\label{Error_Estimate_One_Smoothing_Theorem}
Assume the multigrid iteration
 $\widetilde{u}^{(\ell+1)}_{h_k} = MG(V_{h_k},\lambda^{(\ell)}_{h_k}u^{(\ell)}_{h_k},u^{(\ell)}_{h_k},m)$ of (\ref{aux_problem})
  has the following error reduction rate
\begin{eqnarray}\label{MG_Theta}
\|\widehat{u}^{(\ell+1)}_{h_k}-\widetilde{u}^{(\ell+1)}_{h_k}\|_a
&\leq&\theta \|\widehat{u}^{(\ell+1)}_{h_k}-u^{(\ell)}_{h_k}\|_a,
\end{eqnarray}
and the given eigenpair approximation $(\lambda^{(\ell)}_{h_k},u^{(\ell)}_{h_k})$ has following estimates
\begin{eqnarray}
|\bar{\lambda}_{h_k}-\lambda^{(\ell)}_{h_k}|+\|\bar{u}_{h_k}-u^{(\ell)}_{h_k}\|_0&\leq&
C_u\eta_a(V_H)\|\bar{u}_{h_k}-u^{(\ell)}_{h_k}\|_a.\label{Estimate_h_k_b}
\end{eqnarray}
Under \textbf{Assumptions A} and \textbf{B2}, the resultant eigenpair approximation
$(\lambda^{(\ell+1)}_{h_k},u^{(\ell+1)}_{h_k})\in\mathcal{R}\times V_{h_k}$ produced by
performing Algorithm \ref{Multigrid_Smoothing_Step} has the following error estimates
\begin{eqnarray}
\|\bar{u}_{h_k}-u^{(\ell+1)}_{h_k}\|_a &\leq & \gamma \|\bar{u}_{h_k}-u^{(\ell)}_{h_k}\|_a,\label{Estimate_h_k_1_a}\\
|\bar{\lambda}_{h_k}-\lambda^{(\ell+1)}_{h_k}|+\|\bar{u}_{h_k}-u^{(\ell+1)}_{h_k}\|_0&\leq&
C_u\eta_a(V_H)\|\bar{u}_{h_k}-u^{(\ell+1)}_{h_k}\|_a,\label{Estimate_h_k_1_b}
\end{eqnarray}
where
\begin{eqnarray}\label{Gamma_Definition}
\gamma = \theta + \Big(C_u\theta + \big(1+\theta\big)\big(\widetilde C_u+C_f\big)\big(1+C_u\eta_a(V_H)\big)\Big)\eta_a(V_H)
\end{eqnarray}
and $\widetilde C_u$ depends on the desired eigenpair. 
\end{theorem}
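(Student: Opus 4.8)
The plan is to split the total error $\|\bar u_{h_k}-u^{(\ell+1)}_{h_k}\|_a$ into three contributions that mirror the structure of Algorithm \ref{Multigrid_Smoothing_Step}: the distance between the discrete eigenfunction $\bar u_{h_k}$ and the \emph{exact} solution $\widehat u^{(\ell+1)}_{h_k}$ of the auxiliary boundary value problem (\ref{aux_problem}); the error introduced by replacing $\widehat u^{(\ell+1)}_{h_k}$ with its $m$-step multigrid approximation $\widetilde u^{(\ell+1)}_{h_k}$; and the gain from the Rayleigh--Ritz step on the augmented space $V_{H,h_k}$. Tracking the constants through these three stages is exactly what assembles the expression (\ref{Gamma_Definition}) for $\gamma$, so the proof is really an exercise in bookkeeping once the error equation is set up correctly.

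First I would rewrite the finite element eigenvalue equation (\ref{fem_nonlinear_pde}) for $\bar u_{h_k}$ as $\widehat a(\bar u_{h_k},v_{h_k})=(\bar\lambda_{h_k}\bar u_{h_k}-f(x,\bar u_{h_k}),v_{h_k})$ and subtract (\ref{aux_problem}) to obtain the error equation
\begin{eqnarray*}
\widehat a(\bar u_{h_k}-\widehat u^{(\ell+1)}_{h_k},v_{h_k})
&=&\big(\bar\lambda_{h_k}(\bar u_{h_k}-u^{(\ell)}_{h_k}),v_{h_k}\big)
+\big((\bar\lambda_{h_k}-\lambda^{(\ell)}_{h_k})u^{(\ell)}_{h_k},v_{h_k}\big)\\
&&+\big(f(x,u^{(\ell)}_{h_k})-f(x,\bar u_{h_k}),v_{h_k}\big).
\end{eqnarray*}
Choosing $v_{h_k}=\bar u_{h_k}-\widehat u^{(\ell+1)}_{h_k}$, I would estimate the right-hand side term by term. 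The first two (linear) terms are controlled by the hypothesis (\ref{Estimate_h_k_b}), which converts $\|\bar u_{h_k}-u^{(\ell)}_{h_k}\|_0$ and $|\bar\lambda_{h_k}-\lambda^{(\ell)}_{h_k}|$ into $C_u\eta_a(V_H)\|\bar u_{h_k}-u^{(\ell)}_{h_k}\|_a$, using $\|u^{(\ell)}_{h_k}\|_0=1$ and the Poincaré-type bound $\|\cdot\|_0\leq C\|\cdot\|_a$; the nonlinear term is handled by \textbf{Assumption A}, which bounds it through $\|u^{(\ell)}_{h_k}-\bar u_{h_k}\|_0$ and hence, again via (\ref{Estimate_h_k_b}), contributes a factor $C_f\eta_a(V_H)$. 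Collecting the eigenvalue-dependent constants into $\widetilde C_u$, this should yield
\begin{eqnarray*}
\|\bar u_{h_k}-\widehat u^{(\ell+1)}_{h_k}\|_a\leq \big(\widetilde C_u+C_f\big)\eta_a(V_H)\,\|\bar u_{h_k}-u^{(\ell)}_{h_k}\|_a.
\end{eqnarray*}

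Next I would insert the multigrid reduction rate (\ref{MG_Theta}). Bounding $\|\widehat u^{(\ell+1)}_{h_k}-u^{(\ell)}_{h_k}\|_a\leq\|\widehat u^{(\ell+1)}_{h_k}-\bar u_{h_k}\|_a+\|\bar u_{h_k}-u^{(\ell)}_{h_k}\|_a$ and using the triangle inequality gives
\begin{eqnarray*}
\|\bar u_{h_k}-\widetilde u^{(\ell+1)}_{h_k}\|_a\leq\Big(\theta+(1+\theta)\big(\widetilde C_u+C_f\big)\eta_a(V_H)\Big)\|\bar u_{h_k}-u^{(\ell)}_{h_k}\|_a.
\end{eqnarray*}
Since $V_H\subseteq V_{H,h_k}\subseteq V_{h_k}$ and $\widetilde u^{(\ell+1)}_{h_k}\in V_{H,h_k}$, I would then apply \textbf{Assumption B2} with $V^{h}=V_{H,h_k}$: the eigenfunction estimate (\ref{Error_Estimate_Eigenfunction_Subspace}), together with $\delta_{h_k}(\bar u_{h_k})\leq\|\bar u_{h_k}-\widetilde u^{(\ell+1)}_{h_k}\|_a$ and the monotonicity $\eta_a(V_{H,h_k})\leq\eta_a(V_H)$ from (\ref{relationship_eta_space}), gives $\|\bar u_{h_k}-u^{(\ell+1)}_{h_k}\|_a\leq(1+C_u\eta_a(V_H))\|\bar u_{h_k}-\widetilde u^{(\ell+1)}_{h_k}\|_a$. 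Multiplying the last two displayed factors reproduces precisely $\gamma$ as in (\ref{Gamma_Definition}), which is (\ref{Estimate_h_k_1_a}); the estimate (\ref{Estimate_h_k_1_b}) then follows immediately from the second \textbf{Assumption B2} bound (\ref{Error_Estimate_Eigenvalue_Subspace}) after again using $\eta_a(V_{H,h_k})\leq\eta_a(V_H)$.

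The main obstacle is the first step: deriving the error equation and bounding its right-hand side so that the nonlinearity contributes only a term proportional to $\eta_a(V_H)$ rather than an $O(1)$ perturbation. This is where \textbf{Assumption A} and the coarse-grid hypothesis (\ref{Estimate_h_k_b}) must be combined carefully and where the constants $\widetilde C_u$ and $C_f$ are organized. It is worth noting that every correction factor beyond the leading $\theta$ carries a factor $\eta_a(V_H)$, so $\gamma\to\theta$ as $H\to 0$; hence the contraction $\gamma<1$ holds once the coarse mesh is fine enough and the multigrid rate $\theta$ is bounded below $1$.
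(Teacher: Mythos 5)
Your proposal is correct and follows essentially the same route as the paper's own proof: the error equation for the auxiliary problem bounded via \textbf{Assumption A} and (\ref{Estimate_h_k_b}), the triangle inequality combined with the multigrid contraction (\ref{MG_Theta}), and finally \textbf{Assumption B2} on $V_{H,h_k}$ with the monotonicity of $\eta_a$, whose product of factors reproduces (\ref{Gamma_Definition}) exactly. Your explicit use of the Poincar\'e bound $\|\cdot\|_0\leq C\|\cdot\|_a$ and the normalization $\|u^{(\ell)}_{h_k}\|_0=1$ is in fact slightly more careful than the paper's terse ``property of $\widehat a(\cdot,\cdot)$'' justification.
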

\begin{proof}
From (\ref{fem_nonlinear_pde}), (\ref{Err_functional_Global_0_Norm}) and (\ref{aux_problem}), we have
\begin{eqnarray*}\label{One_Correction_1}
\widehat{a}(\bar{u}_{h_k}-\widehat{u}^{(\ell+1)}_{h_k}, v_{h_k}\big)
&=&\Big(\big(\bar{\lambda}_{h_k}\bar{u}_{h_k}-\lambda^{(\ell)}_{h_k}u^{(\ell)}_{h_k}\big)
+\big(f(x,\bar{u}_{h_k})-f(x,u^{(\ell)}_{h_k})\big),v_{h_k}\Big),\nonumber\\
&\leq& |\bar\lambda_{h_k}|\|\bar u_{h_k}-u_{h_k}^{(\ell)}\|_0\|v_{h_k}\|_0+|\bar\lambda_{h_k}-\lambda_{h_k}^{(\ell)}|\|u_{h_k}^{(\ell)}\|\nonumber\\
&&\ \ \ \ +C_f\|\bar u_{h_k}-u_{h_k}^{(\ell)}\|_0\|v_{h_k}\|_a,\ \ \  \forall v_{h_k}\in V_{h_k}.
\end{eqnarray*}
It leads to the following estimates by using the property of $\widehat{a}(\cdot,\cdot)$ 
 and (\ref{Estimate_h_k_b})
\begin{eqnarray}\label{One_Correction_2}
\|\bar{u}_{h_k}-\widehat{u}^{(\ell+1)}_{h_k}\|_a &\leq&
 (\widetilde C_u+C_f)\eta_a(V_H)\|\bar{u}_{h_k}-u^{(\ell)}_{h_k}\|_a,
\end{eqnarray}
where $\widetilde C_u$ depends on the desired eigenpair.  

Combining (\ref{MG_Theta}) and (\ref{One_Correction_2}) leads to the following
error estimate for $\widetilde{u}^{(\ell+1)}_{h_k}$
\begin{eqnarray}\label{One_Correction_3}
\|\widehat{u}^{(\ell+1)}_{h_k}-\widetilde{u}^{(\ell+1)}_{h_k}\|_a &\leq& \theta\|\widehat{u}^{(\ell+1)}_{h_k}-u^{(\ell)}_{h_k}\|_a\nonumber\\
&\leq& \theta\big(\|\widehat{u}^{(\ell+1)}_{h_k}-\bar{u}_{h_k}\|_a
 +\|\bar{u}_{h_k}-u^{(\ell)}_{h_k}\|_a\big)\nonumber\\
&\leq& \theta\big(1+(\widetilde C_u+C_f)\eta_a(V_H)\big)\|\bar{u}_{h_k}-u^{(\ell)}_{h_k}\|_a.
\end{eqnarray}
Then from (\ref{One_Correction_2}) and (\ref{One_Correction_3}), we have the following inequalities
\begin{eqnarray}\label{One_Correction_4}
\|\bar{u}_{h_k}-\widetilde{u}^{(\ell+1)}_{h_k}\|_a&\leq& \|\bar{u}_{h_k}-\widehat{u}^{(\ell+1)}_{h_k}\|_a
+\|\widehat{u}^{(\ell+1)}_{h_k}-\widetilde{u}^{(\ell+1)}_{h_k}\|_a\nonumber\\
&\leq& \big(\theta+(1+\theta)(\widetilde C_u+C_f)\eta_a(V_H)\big)\|\bar{u}_{h_k}-u^{(\ell)}_{h_k}\|_a.
\end{eqnarray}

The eigenvalue problem (\ref{Eigen_Augment_Problem})
can be regarded  as a finite dimensional subspace approximation of the eigenvalue problem (\ref{fem_nonlinear_pde}).
Using (\ref{Error_Estimate_Eigenfunction_Subspace}) and (\ref{Error_Estimate_Eigenvalue_Subspace}) in \textbf{Assumption B2},
the following estimates hold
\begin{eqnarray}\label{Error_u_u_h_2}
\|\bar{u}_{h_k}-u^{(\ell+1)}_{h_k}\|_a &\leq& \big(1+C_u\eta_a(V_{H,h_k})\big)\inf_{v_{H,h_k}\in
V_{H,h_k}}\|\bar{u}_{h_k}-v_{H,h_k}\|_a\nonumber\\
&\leq& \big(1+C_u\eta_a(V_H)\big)\|\bar{u}_{h_k}-\widetilde{u}_{h_k}^{(\ell+1)}\|_a\nonumber\\
&\leq& \gamma \|\bar{u}_{h_k}-u_{h_k}^{(\ell)}\|_a,
\end{eqnarray}
and
\begin{eqnarray}\label{Error_u_u_h_2_Negative}
|\bar{\lambda}_{h_k}-\lambda_{h_k}^{(\ell+1)}|+\|\bar{u}_{h_k}-u_{h_k}^{(\ell+1)}\|_0&\leq&
C_u\eta_a(V_{H,h_k})\|\bar{u}_{h_k}-u_{h_k}^{(\ell+1)}\|_a\nonumber\\
&\leq& C_u\eta_a(V_H)\|\bar{u}_{h_k}-u_{h_k}^{(\ell+1)}\|_a.
\end{eqnarray}
Then we obtained the desired results (\ref{Estimate_h_k_1_a}) and (\ref{Estimate_h_k_1_b})
and the proof is complete.
\end{proof}

\subsection{Full multigrid method for eigenvalue problem}
In this subsection, based on the one correction step defined in Algorithm
\ref{Multigrid_Smoothing_Step}, a type of full multigrid scheme
will be introduced. The optimal error estimate with the optimal computational work
will be deduced for this type of full multigrid method.

Since the multigrid method for the boundary value problem has the uniform
error reduction rate (cf. \cite{BrennerScott,Hackbusch_Book}), we can choose suitable $m$
such that $\theta<1$ in (\ref{MG_Theta}). From the definition (\ref{Gamma_Definition}) for $\gamma$,
it is obvious that $\gamma<1$ when the mesh size $H$ of $\mathcal{T}_H$ is small enough.
Based on these property, we can design a full multigrid method for nonlinear eigenvalue problem as follows.
\begin{algorithm}\label{Full_Multigrid}Full Multigrid Scheme
\begin{enumerate}
\item Solve the following nonlinear eigenvalue problem in $V_{h_1}$:
Find $(\lambda_{h_1}, u_{h_1})\in \mathcal{R}\times V_{h_1}$ such that $b(u_{h_1},u_{h_1})=1$ and
\begin{equation*}
a(u_{h_1}, v_{h_1}) = \lambda_{h_1} b(u_{h_1}, v_{h_1}), \quad \forall v_{h_1}\in  V_{h_1}.
\end{equation*}
Solve this nonlinear eigenvalue problem to get the desired eigenpair approximation
$(\lambda_{h_1},u_{h_1})\in\mathcal{R}\times V_{h_1}$.
\item For $k=2,\cdots,n$, do the following iterations
\begin{itemize}
\item Set $\lambda_{h_k}^{(0)}=\lambda_{h_{k-1}}$ and $u_{h_k}^{(0)} = u_{h_{k-1}}$.
\item Perform the following multigrid iterations
\begin{eqnarray*}
(\lambda^{(\ell+1)}_{h_k}, u^{(\ell+1)}_{h_k})=
 EigenMG(V_H,\lambda^{(\ell)}_{h_k},u^{(\ell)}_{h_k},V_{h_k},m),
\ \ \   {\rm for}\ \ell=0,\cdots, p-1.
\end{eqnarray*}
\item Set $\lambda_{h_k}=\lambda^{(p)}_{h_k}$ and $u_{h_k}=u^{(p)}_{h_k}$.
\end{itemize}
End Do
\end{enumerate}
Finally, we obtain an eigenpair approximation
$(\lambda_{h_n},u_{h_n})\in \mathcal{R}\times V_{h_n}$ in the finest space.
\end{algorithm}
\begin{theorem}\label{Error_Full_Multigrid_Theorem}
Assume the conditions of Theorem \ref{Error_Estimate_One_Smoothing_Theorem}
and \textbf{Assumption B1} hold.
After implementing Algorithm \ref{Full_Multigrid}, the resultant
eigenpair approximation $(\lambda_{h_n},u_{h_n})$ has the following
error estimate
\begin{eqnarray}
\|\bar{u}_{h_n}-u_{h_n}\|_a &\leq&
C\frac{\gamma^p}{1-\beta\gamma^p}\delta_{h_n}(u),\label{FM_Err_fun}\\
|\bar{\lambda}_{h_n}-\lambda_{h_n}| + \|\bar{u}_{h_n}-u_{h_n}\|_0 &\leq&
 C\frac{\gamma^p}{1-\beta\gamma^p}\eta_a(V_H)\delta_{h_n}(u),\label{FM_Err_eigen}
\end{eqnarray}
under the condition $\beta\gamma^p<1$.
\end{theorem}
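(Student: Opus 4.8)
The plan is to argue by induction on the mesh level $k$, collapsing the vector iteration into a scalar recursion for the quantity $e_k := \|\bar u_{h_k}-u_{h_k}\|_a$. The base case is $e_1=0$, since step~1 of Algorithm~\ref{Full_Multigrid} solves the nonlinear eigenvalue problem exactly in $V_{h_1}$, so that $u_{h_1}=\bar u_{h_1}$. The target is to establish $e_k\le C\frac{\gamma^p}{1-\beta\gamma^p}\delta_{h_k}(u)$ for every $k$, which gives (\ref{FM_Err_fun}) at $k=n$; the companion estimate (\ref{FM_Err_eigen}) will then follow by feeding this bound into the low-order conclusion (\ref{Estimate_h_k_1_b}) of Theorem~\ref{Error_Estimate_One_Smoothing_Theorem} at the finest level.

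First I would set up the per-level contraction. On level $k$ the scheme applies the one-correction step $p$ times starting from $u_{h_k}^{(0)}=u_{h_{k-1}}$. Granting for the moment that hypothesis (\ref{Estimate_h_k_b}) holds at the start of the level, Theorem~\ref{Error_Estimate_One_Smoothing_Theorem} supplies the $a$-norm contraction (\ref{Estimate_h_k_1_a}), while its conclusion (\ref{Estimate_h_k_1_b}) is exactly the form of (\ref{Estimate_h_k_b}) required for the next inner step, so the hypothesis is self-reproducing through all $p$ iterations. Iterating gives $e_k\le\gamma^p\|\bar u_{h_k}-u_{h_{k-1}}\|_a$. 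I would then bound the transferred error by a triangle inequality through the two consecutive exact discrete solutions,
\begin{eqnarray*}
\|\bar u_{h_k}-u_{h_{k-1}}\|_a \le \|\bar u_{h_k}-\bar u_{h_{k-1}}\|_a + e_{k-1},
\end{eqnarray*}
inserting the continuous eigenfunction $u$ into the first term and applying (\ref{Err_Eigenfunction_Global_1_Norm}) together with the monotonicity $\eta_a(V_{h_j})\le\eta_a(V_H)$ from (\ref{relationship_eta_space}). This yields $\|\bar u_{h_k}-\bar u_{h_{k-1}}\|_a\le(1+C_u\eta_a(V_H))(\delta_{h_k}(u)+\delta_{h_{k-1}}(u))$, and the refinement relation (\ref{delta_recur_relation}) collapses this to $C\delta_{h_k}(u)$, producing the scalar recursion $e_k\le\gamma^p\big(C\delta_{h_k}(u)+e_{k-1}\big)$.

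Solving the recursion is then routine. Unrolling from $e_1=0$ gives $e_k\le C\sum_{j=2}^{k}\gamma^{(k-j+1)p}\delta_{h_j}(u)$, and substituting $\delta_{h_j}(u)=\beta^{k-j}\delta_{h_k}(u)$, again from (\ref{delta_recur_relation}), turns the sum into $\gamma^p\sum_{i=0}^{k-2}(\beta\gamma^p)^i\le \gamma^p/(1-\beta\gamma^p)$, which converges precisely under the hypothesis $\beta\gamma^p<1$. Taking $k=n$ gives (\ref{FM_Err_fun}), and (\ref{Estimate_h_k_1_b}) at level $n$ combined with this bound gives (\ref{FM_Err_eigen}).

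The step I expect to be the main obstacle is the one I postponed: verifying that hypothesis (\ref{Estimate_h_k_b}) genuinely holds for the transferred initial guess $u_{h_{k-1}}$ at the start of each level $k\ge 2$. For $k=2$ this is immediate from \textbf{Assumption B2} applied to the pair $V_{h_1}\subset V_{h_2}$, since $u_{h_1}=\bar u_{h_1}$ is exact. For $k\ge 3$ the guess is only an approximation of $\bar u_{h_{k-1}}$, so I would bound the left side of (\ref{Estimate_h_k_b}) by combining \textbf{Assumption B2} (comparing $\bar u_{h_k}$ and $\bar u_{h_{k-1}}$ in the $\|\cdot\|_0$ and eigenvalue norms) with the inductive low-order estimate (\ref{Estimate_h_k_1_b}) inherited from level $k-1$, arriving at a bound of the form $C_u\eta_a(V_H)\big(\|\bar u_{h_k}-\bar u_{h_{k-1}}\|_a+e_{k-1}\big)$. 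The delicate point is reabsorbing this into $C_u\eta_a(V_H)\|\bar u_{h_k}-u_{h_{k-1}}\|_a$: a priori $e_{k-1}$ and $\|\bar u_{h_k}-\bar u_{h_{k-1}}\|_a$ are of the same order $\delta_{h_{k-1}}(u)$, so the comparison $\|\bar u_{h_k}-u_{h_{k-1}}\|_a\ge\|\bar u_{h_k}-\bar u_{h_{k-1}}\|_a-e_{k-1}$ only closes once the smallness of $H$ (making $\gamma<1$) and of $\gamma^p$ forces $e_{k-1}$ to carry a strictly smaller constant, so that the two $a$-distances stay comparable with a fixed factor absorbable into $C$. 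Ensuring this constant is uniform in $k$ is the genuine content of the argument and is exactly where the smallness assumptions on $H$ and $\gamma^p$ must be invoked.
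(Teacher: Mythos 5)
Your proposal is correct and follows essentially the same route as the paper: the scalar recursion $e_k\le\gamma^p\big(C\delta_{h_k}(u)+e_{k-1}\big)$ obtained from Theorem \ref{Error_Estimate_One_Smoothing_Theorem} and \textbf{Assumption B1}, unrolled via the geometric series under $\beta\gamma^p<1$, with (\ref{FM_Err_eigen}) read off from (\ref{Estimate_h_k_1_b}). Your closing paragraph on verifying that the transferred initial guess $u_{h_{k-1}}$ actually satisfies hypothesis (\ref{Estimate_h_k_b}) at each new level is a point the paper's proof passes over in silence, so your treatment is, if anything, more complete than the original.
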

\begin{proof}
Define $e_{k}:=\bar{u}_{h_k}-u_{h_k}$. Then from step 1
in Algorithm \ref{Full_Multigrid}, it is obvious $e_1=0$.
For $k=2,\cdots,n$, from \textbf{Assumption B1} and
Theorem \ref{Error_Estimate_One_Smoothing_Theorem},  we have
\begin{eqnarray}\label{FM_Estimate_1}
\|e_k\|_a&\leq& \gamma^p\|\bar{u}_{h_k}-u_{h_{k-1}}\|_a\nonumber\\
&\leq& \gamma^p\big(\|\bar{u}_{h_k}-\bar{u}_{h_{k-1}}\|_a
+\|\bar{u}_{h_{k-1}}-u_{h_{k-1}}\|_a\big)\nonumber\\
&\leq& \gamma^p \big(C\delta_{h_k}(u)+\|e_{k-1}\|_a\big).
\end{eqnarray}
By iterating inequality (\ref{FM_Estimate_1}) and the condition $\beta\gamma^p<1$, the following
inequalities hold
\begin{eqnarray}
\|e_n\|_a&\leq& C\gamma^p\delta_{h_n}(u)+C\gamma^{2p}\delta_{h_{n-1}}(u)+\cdots
+C\gamma^{(n-1)p}\delta_{h_2}(u)\nonumber\\
&\leq& C\sum_{k=2}^n \gamma^{(n-k+1)p}\delta_{h_{\ell}}(u)
=C\left(\sum_{k=2}^n \big(\beta\gamma^{p}\big)^{n-k}\right)\gamma^{p}\delta_{h_n}(u)\nonumber\\
&\leq&C\frac{\gamma^p}{1-\beta\gamma^p}\delta_{h_n}(u).
\end{eqnarray}
For such choice of $p$, we arrive the desired result (\ref{FM_Err_fun}) and (\ref{FM_Err_eigen}) can be obtained by
(\ref{Error_Estimate_Eigenvalue_Subspace}), (\ref{Estimate_h_k_1_b}) and (\ref{FM_Err_fun}).
\end{proof}
\begin{remark}
The good convergence rate of the multigrid method for boundary value problems leads to that
we do not need to choose large $m$ and $p$ \cite{BrennerScott,Hackbusch_Book,Shaidurov,Xu}.
\end{remark}
\subsection{Estimate of the computational work}
In this subsection, we turn our attention to the estimate of computational work
for the full multigrid method defined in Algorithm \ref{Full_Multigrid}. It will be shown that
the full multigrid method makes solving the nonlinear eigenvalue problem need almost the same work as
solving the corresponding linear boundary value problems.

First, we define the dimension of each level
finite element space as $N_k:={\rm dim}V_{h_k}$. Then we have
\begin{eqnarray}\label{relation_dimension}
N_k\approx\Big(\frac{1}{\beta}\Big)^{d(n-k)}N_n,\ \ \ k=1,2,\cdots, n.
\end{eqnarray}

The computational work for the second step in Algorithm \ref{Multigrid_Smoothing_Step} is different
from the linear eigenvalue problems \cite{LinXie,Xie_IMA,Xie_Nonconforming,Xie_JCP}.
 In this step, we need to solve a nonlinear eigenvalue problem (\ref{Eigen_Augment_Problem}).
Always, some type of nonlinear iteration method (self-consistent iteration or
 Newton type iteration) is adopted to solve this nonlinear eigenvalue problem.
In each nonlinear iteration step, it is required to assemble the matrix on the finite element
space $V_{H,h_k}$ ($k=2,\cdots,n$) which needs the computational work $\mathcal{O}(N_k)$.
Fortunately, the matrix assembling can be carried out by the parallel way easily
in the finite element space since it has no data transfer.

\begin{theorem}\label{optimal_work}
Assume we use $\vartheta$ computing-nodes in Algorithm \ref{Full_Multigrid}, the nonlinear
eigenvalue solving in the coarse spaces $V_{H,h_k}$ ($k=1,\cdots, n$) and
$V_{h_1}$ need work $\mathcal{O}(M_H)$ and $\mathcal{O}(M_{h_1})$, respectively,
and the work of the multigrid solver
$MG(V_{h_k},\lambda^{(\ell)}_{h_k}u^{(\ell)}_{h_k}$, 
$u^{(\ell)}_{h_k},m)$
in each level space $V_{h_k}$ is $\mathcal{O}(N_k)$ for $k=2,3,\cdots,n$.
Let $\varpi$ denote the nonlinear iteration times when we solve
the nonlinear eigenvalue problem (\ref{Eigen_Augment_Problem}).
Then in each computational node, the work involved
in Algorithm \ref{Full_Multigrid} has the following estimate
\begin{eqnarray}\label{Computation_Work_Estimate}
{\rm Total\ work}&=&\mathcal{O}\Big(\big(1+\frac{\varpi}{\vartheta}\big)N_n
+ M_H\log N_n+M_{h_1}\Big).
\end{eqnarray}
\end{theorem}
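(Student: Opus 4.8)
The plan is to account for the arithmetic cost additively, first across the two steps of Algorithm \ref{Full_Multigrid}, then across the levels $k=2,\dots,n$ and the inner correction loop $\ell=0,\dots,p-1$. Since the remark following Theorem \ref{Error_Full_Multigrid_Theorem} guarantees that $m$ and $p$ may be fixed independently of $n$, both act as bounded constants and are absorbed into the $\mathcal{O}(\cdot)$ notation; the only unbounded quantities are $N_k$, $n$, $\varpi$, and $\vartheta$. Step 1 of Algorithm \ref{Full_Multigrid} contributes exactly $\mathcal{O}(M_{h_1})$ by hypothesis, so the heart of the argument is to bound the cost of one pass of $EigenMG$ on level $k$ and then sum.

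Next I would isolate the cost of a single correction step at level $k$, i.e.\ one call of $EigenMG(V_H,\lambda^{(\ell)}_{h_k},u^{(\ell)}_{h_k},V_{h_k},m)$. Its first phase is the multigrid solve $MG(V_{h_k},\cdot,\cdot,m)$, which costs $\mathcal{O}(N_k)$ by assumption. Its second phase solves the nonlinear eigenvalue problem (\ref{Eigen_Augment_Problem}) on $V_{H,h_k}$ by $\varpi$ nonlinear iterations. Here I would separate the two genuinely different pieces of each such iteration: the assembly of the stiffness and mass data on $V_{H,h_k}$, which requires integrating $f(x,\cdot)$ against fine-level data and hence costs $\mathcal{O}(N_k)$, versus the algebraic eigensolve on the low-dimensional system of size $\dim V_H+1$. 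Because, as already noted in the text, the assembly carries no data transfer, its $\varpi$ repetitions distribute across the $\vartheta$ nodes to give $\mathcal{O}(\varpi N_k/\vartheta)$ per node, whereas the coarse eigensolves accumulate to the inherently serial cost $\mathcal{O}(M_H)$. Hence one correction step costs
\begin{eqnarray*}
\mathcal{O}\big(N_k\big)+\mathcal{O}\big(\varpi N_k/\vartheta\big)+\mathcal{O}\big(M_H\big)
=\mathcal{O}\Big(\big(1+\tfrac{\varpi}{\vartheta}\big)N_k+M_H\Big),
\end{eqnarray*}
and, $p$ being constant, the same order bounds the total work expended on level $k$.

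It then remains to sum over $k=2,\dots,n$ and add the step 1 contribution. The $N_k$ terms are controlled by the geometric dimension relation (\ref{relation_dimension}): writing $N_k\approx\beta^{-d(n-k)}N_n$ gives $\sum_{k=2}^n N_k\leq N_n\sum_{j=0}^{\infty}\beta^{-dj}=\mathcal{O}(N_n)$, since $\beta>1$ forces $\beta^{-d}<1$ and the series converges. The $M_H$ terms simply multiply by the number of levels, and the same relation yields $n=\mathcal{O}(\log N_n)$---from $N_1\approx\beta^{-d(n-1)}N_n$ one gets $(n-1)d\log\beta\approx\log(N_n/N_1)$ with $N_1$, $d$, $\beta$ fixed---so that $(n-1)M_H=\mathcal{O}(M_H\log N_n)$. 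Collecting the three contributions $\mathcal{O}((1+\varpi/\vartheta)N_n)$, $\mathcal{O}(M_H\log N_n)$, and $\mathcal{O}(M_{h_1})$ reproduces exactly (\ref{Computation_Work_Estimate}).

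I expect the main obstacle to be the careful bookkeeping of the second paragraph rather than any analytic difficulty: one must correctly identify which operations of the coarse nonlinear eigensolve parallelize (the fine-grid assembly, responsible for the $\varpi/\vartheta$ factor) and which do not (the coarse algebraic eigensolve, responsible for the undivided $M_H$), and one must verify that assembly is the only place where fine-level work $\mathcal{O}(N_k)$ re-enters the otherwise coarse second phase. Once the per-level cost is pinned down, the geometric-series bound and the estimate $n=\mathcal{O}(\log N_n)$ are entirely routine.
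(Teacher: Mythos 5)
Your proposal is correct and follows essentially the same route as the paper: it bounds the per-level correction cost by $\mathcal{O}\big((1+\varpi/\vartheta)N_k+M_H\big)$, sums over levels via the geometric relation (\ref{relation_dimension}), and uses $n=\mathcal{O}(\log N_n)$ for the coarse-solve terms. Your second paragraph is in fact somewhat more explicit than the paper's proof about why the assembly work parallelizes while the coarse eigensolve does not, but the decomposition and the summation argument are identical.
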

\begin{proof}
We use $W_{k}$ to denote the work involved in each correction step on the $k$-th
finite element space $V_{h_{k}}$.
From the definition of Algorithm \ref{Multigrid_Smoothing_Step}, we have the following estimate
\begin{eqnarray}\label{work_k}
W_k&=&\mathcal{O}\left(N_k +M_H+\varpi\frac{N_k}{\vartheta}\right).
\end{eqnarray}
Based on the property (\ref{relation_dimension}), iterating (\ref{work_k}) leads to
\begin{eqnarray}\label{Work_Estimate}
\text{Total work} &=& \sum_{k=1}^nW_k\nonumber =
\mathcal{O}\left(M_{h_1}+\sum_{k=2}^n
\Big(N_k + M_H+\varpi\frac{N_k}{\vartheta}\Big)\right)\nonumber\\
&=& \mathcal{O}\left(\sum_{k=2}^n\Big(1+\frac{\varpi}{\vartheta}\Big)N_k
+ (n-1) M_H + M_{h_1}\right)\nonumber\\
&=& \mathcal{O}\left(\sum_{k=2}^n
\Big(\frac{1}{\beta}\Big)^{d(n-k)}\Big(1+\frac{\varpi}{\vartheta}\Big)N_n
+ M_H\log N_n+M_{h_1}\right)\nonumber\\
&=& \mathcal{O}\left(\big(1+\frac{\varpi}{\vartheta}\big)N_n
+ M_H\log N_n+M_{h_1}\right).
\end{eqnarray}
This is the desired result and we complete the proof.
\end{proof}
\begin{remark}
Since we have a good enough initial solution $\widetilde{u}_{h_{k+1}}$
in the second step of Algorithm \ref{Multigrid_Smoothing_Step},
then solving the nonlinear eigenvalue problem (\ref{Eigen_Augment_Problem}) always does not
need many nonlinear iteration  times (always $\varpi\leq 3$).
In this case, the complexity in each computational node will be $\mathcal{O}(N_n)$ provided
$M_H\ll N_n$ and $M_{h_1}\leq N_n$.
\end{remark}
\section{Numerical results}
In this section, two numerical examples are presented to illustrate the
efficiency of the full multigrid scheme proposed in this paper.

\begin{example}\label{Example_1}
In this example, we consider the ground state solution of Gross-Pitaevskii equation (GPE) for
Bose-Einstein condensation (BEC).
\begin{equation}\label{nonlinear_pde}
\left\{
\begin{array}{rcl}
-\triangle u+Wu +\zeta|u|^2u&=&\lambda u, \quad {\rm in} \  \Omega,\\
u&=&0, \ \  \quad {\rm on}\  \partial\Omega,\\
\int_{\Omega}u^2d\Omega&=&1,
\end{array}
\right.
\end{equation}
where $\Omega$ denotes the three dimensional domain $[0,1]^3$ , $ \zeta=1$
and $W=x_1^2+x_2^2+x_3^2$.
\end{example}
From the results \cite{CancesChakirMaday,XieXie}, the \textbf{Assumptions A}, \textbf{B1} and \textbf{B2} hold
for the GPE (\ref{nonlinear_pde}). So the proposed full multigrid method can be applied
to the GPE (\ref{nonlinear_pde}).
\begin{figure}
\centering
\includegraphics[width=8 cm]{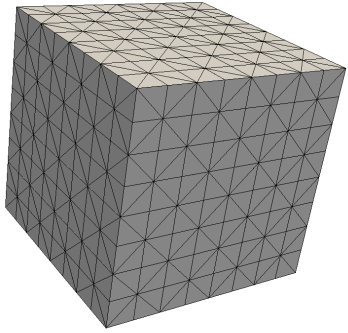}
\caption{The initial mesh for Examples \ref{Example_1}}\label{mesh}
\end{figure}
The sequence of finite elements spaces are constructed by linear element on a series of meshes
produced by regular refinement with $\beta=2$. In each level of the full multigrid scheme
 defined in Algorithm \ref{Full_Multigrid}, the parameters are set to be
$m=1$, $p=1$. And we take 3 conjugate gradient smooth steps for the presmoothing
and postsmoothing  iteration step in the multigrid iteration in the step 1 of
Algorithm \ref{Multigrid_Smoothing_Step}. Since the exact solution is not known,
an adequate accurate approximation is chosen as the exact solution for our numerical test.
Figure \ref{mesh} shows the corresponding initial mesh.

Figure \ref{error of bec1} gives the corresponding numerical results of Algorithm \ref{Full_Multigrid}. From Figure
\ref{error of bec1}, we can find that the full multigrid
scheme can
obtain the optimal error estimates for both eigenvalue and eigenvector.

In order to show the efficiency of Algorithm \ref{Full_Multigrid},
we provide the CPU time for Algorithm \ref{Full_Multigrid}.
Here, we choose the Package ARPACK as the eigenvalue solving tool and the
full multigrid scheme is running on the
machine PowerEdge R720 with the linux system. The corresponding results
are presented in Table \ref{table1} which shows the efficiency and linear
complexity of Algorithm \ref{Full_Multigrid}.

\begin{table}[ht]
\centering
\caption{The CPU time for Example \ref{Example_1} by Algorithm \ref{Full_Multigrid}}\label{table1}
\begin{tabular}{|c|c|c|}\hline
Number of levels  & Number of elements & Time for Algorithm \ref{Full_Multigrid}\\
\hline
1 & 3072          & 0.45           \\
\hline
2 & 24576         & 1.55           \\
\hline
3 & 196608        & 8.08           \\
\hline
4 & 1572846       & 63.01          \\
\hline
5 & 12582912      & 519.86         \\
\hline
\end{tabular}
\end{table}

\begin{figure}[ht]
\centering
\includegraphics[width=7cm]{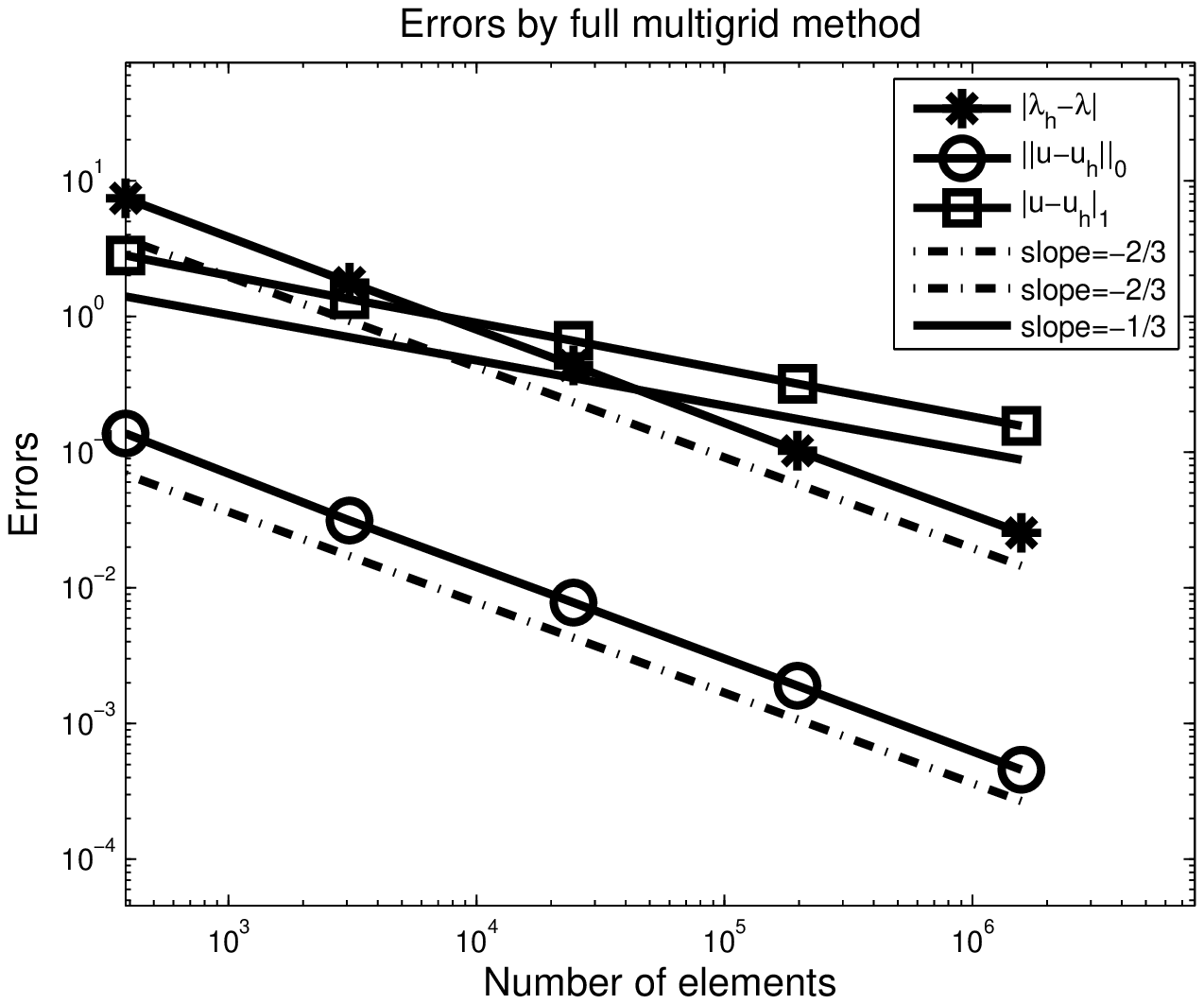}
\includegraphics[width=7cm]{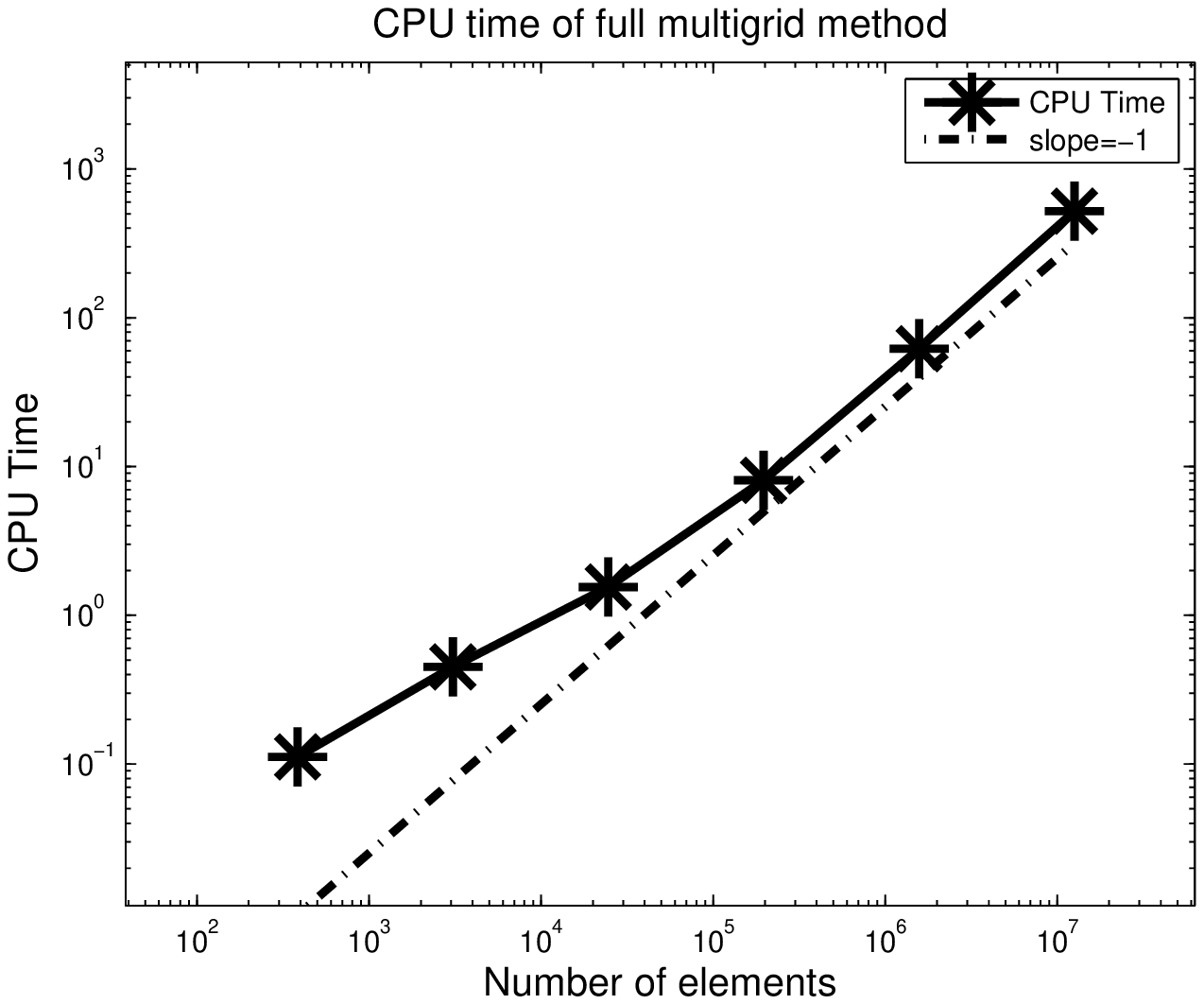}
\caption{Left: The errors of the full multigrid method for the ground state solution of GPE,
where $\lambda_h$ and $u_h$ denote the numerical eigenvalue and eigenfunction
by Algorithm \ref{Full_Multigrid}. Right: CPU Time of Algorithm \ref{Full_Multigrid}
for Example \ref{Example_1}}\label{error of bec1}
\end{figure}

\begin{example}\label{Example_2}
In the second example, we consider the GPE with the coefficient $\zeta=100$ and $W=x_1^2+x_2^2+x_3^2$
on the domain $\Omega = [0,1]^3$.
\end{example}
The initial mesh used in this example is the one shown in Figure \ref{mesh}.
Numerical results are present in Table \ref{table2} and Figure \ref{error of bec100}.
It is obvious, Table \ref{table2} and Figure \ref{error of bec100} also show that
the efficiency and linear complexity of  Algorithm \ref{Full_Multigrid}.

\begin{table}[ht]
\centering
\caption{The CPU time for Example \ref{Example_2} by Algorithm \ref{Full_Multigrid}}\label{table2}
\begin{tabular}{|c|c|c|}\hline
Number of levels  & Number of elements & time for Algorithm \ref{Full_Multigrid} \\
\hline
1 & 24576         & 4.32             \\
\hline
2 & 196608        & 11.43            \\
\hline
3 & 1572846       & 70.88            \\
\hline
4 & 12582912      & 577.52           \\
\hline
\end{tabular}
\end{table}

\begin{figure}[ht]
\centering
\includegraphics[width=7cm]{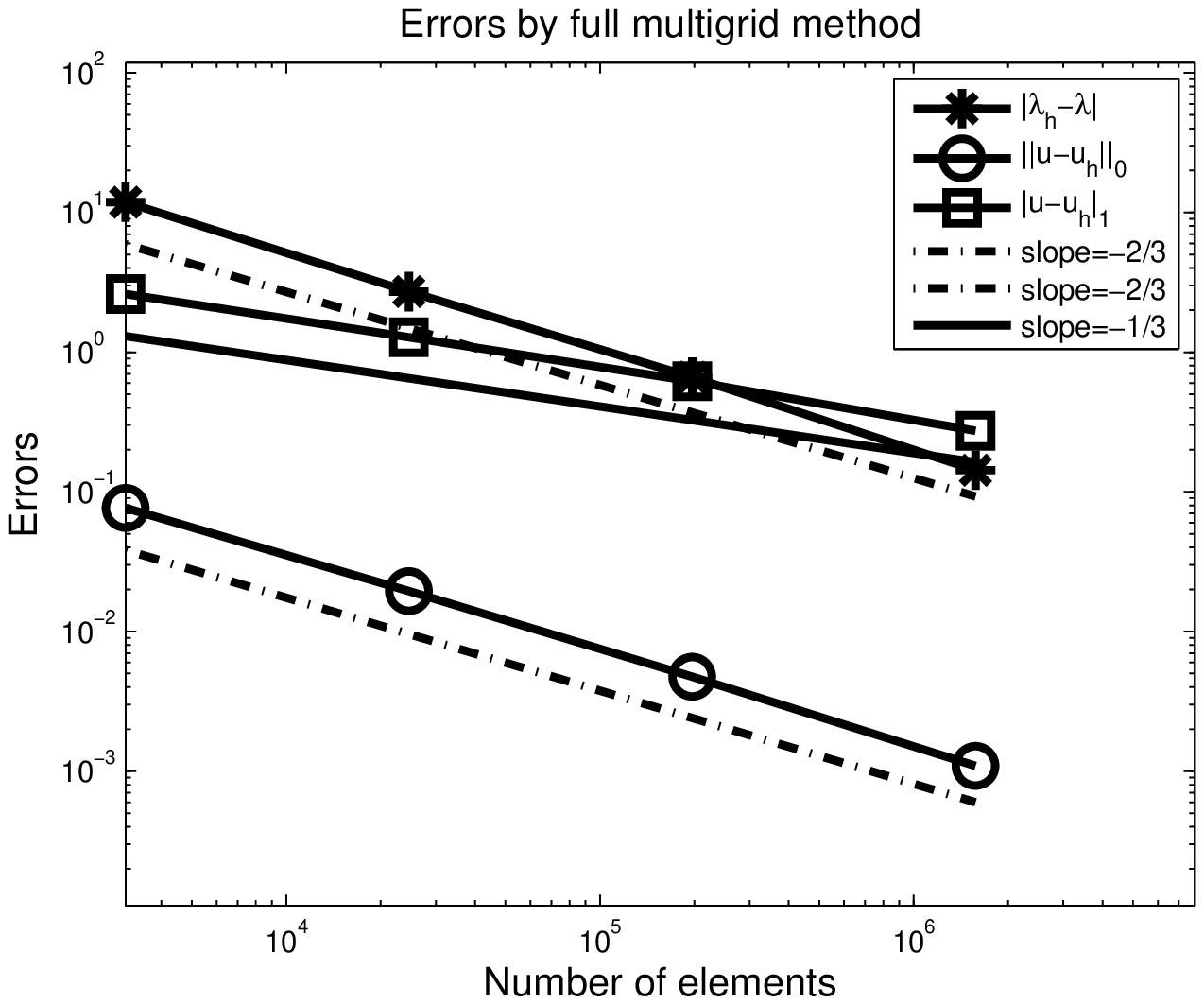}
\includegraphics[width=7cm]{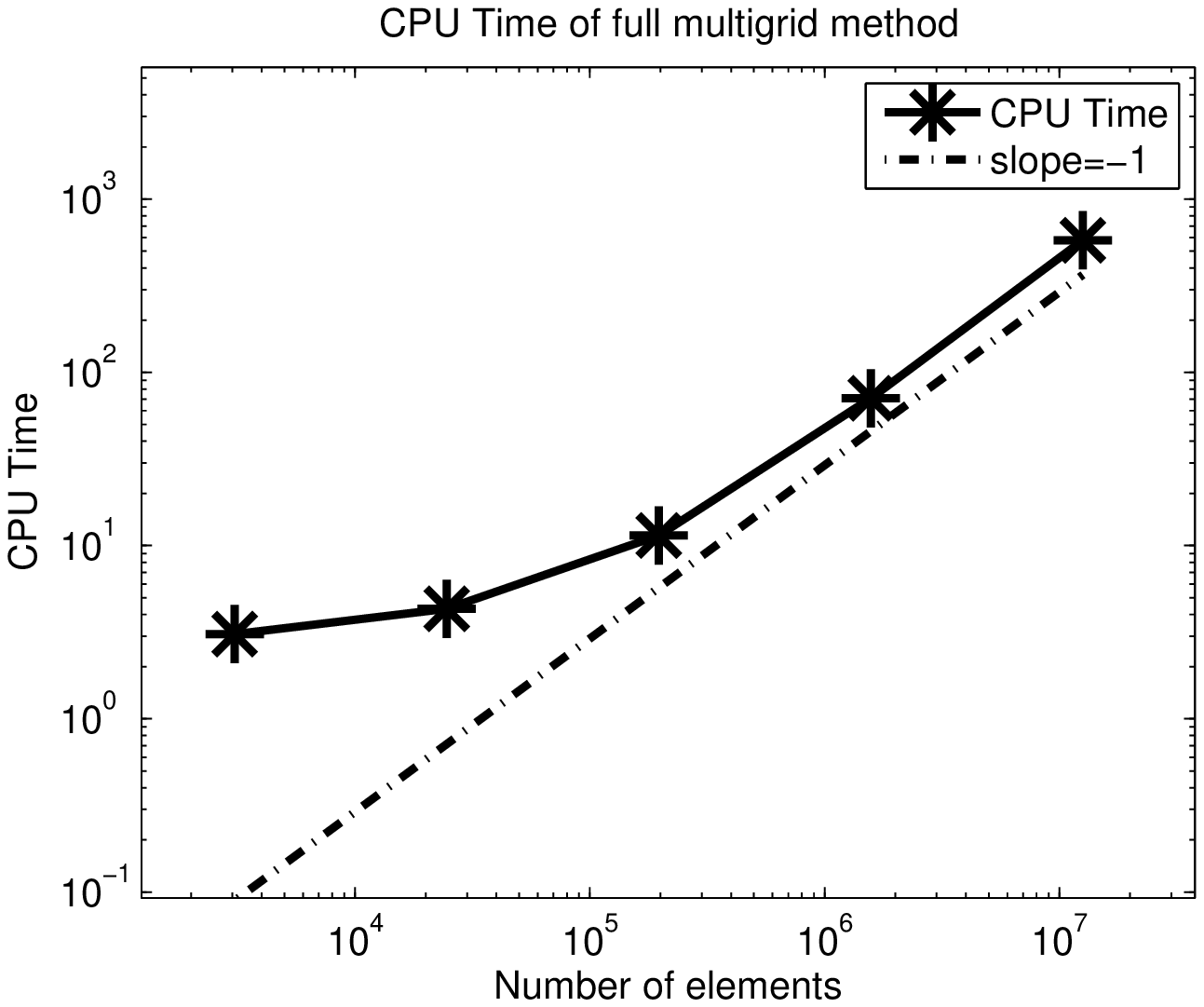}
\caption{Left: The errors of the full multigrid method for the ground state solution of GPE,
where $\lambda_h$ and $u_h$ denote the numerical eigenvalue and eigenfunction
by Algorithm \ref{Full_Multigrid}. Right: CPU Time of Algorithm \ref{Full_Multigrid}
for Example \ref{Example_2}}\label{error of bec100}
\end{figure}

\section{Concluding remarks}
In this paper, a type of full multigrid method is introduced for nonlinear eigenvalue problems.
The proposed methods is based on the combination of the multilevel correction technique for
nonlinear eigenvalue problems and the multigrid iteration for linear boundary value problems.
The multilevel correction technique can transform the nonlinear eigenvalue solving into
a series of solutions of linear boundary value problems on a sequences of finite element spaces.
The multigrid iteration is one of the efficient iteration which has uniform error reduction rate.

The multigrid iteration can also be replaced by other types of efficient iteration schemes
such as algebraic multigrid method, the type of preconditioned schemes based on
the subspace decomposition and subspace corrections (see, e.g., \cite{BrennerScott, Xu}) and the
domain decomposition method (see, e.g., \cite{ToselliWidlund,XuZhou_Eigen_LocalParallel}).


\end{document}